\newcommand{\Om}{\Omega}
\newcommand{\la}{\lambda}
\newcommand{\La}{\Lambda}
\newcommand{\De}{\Delta}
\newcommand{\na}{\nabla}
\newcommand{\Ga}{\Ga}
\newcommand{\vp}{\varphi}
\newcommand{\fr}[2]{\frac{#1}{#2}}
\newcommand{\sbs}{\subset}
\newcommand{\sbse}{\subseteq}
\newcommand{\Spn}{\mathbb{S}^{n-1}}
\newcommand{\ml}{\mleft}
\newcommand{\mr}{\mright}
\newcommand{\ddfr}[2]{\frac{\displaystyle #1}{\displaystyle #2}}
\newcounter{proofpart}
\xpretocmd{\proof}{\setcounter{proofpart}{0}}{}{}
\newcommand{\proofpart}[1]{%
  \par
  \addvspace{\medskipamount}%
  \stepcounter{proofpart}%
  \noindent\emph{Step \theproofpart: #1}\par\nobreak\smallskip
  \@afterheading
}
\newcommand{%
\def\svgwidth{1\columnwidth}
\begingroup%
  \makeatletter%
  \providecommand\color[2][]{%
    \errmessage{(Inkscape) Color is used for the text in Inkscape, but the package 'color.sty' is not loaded}%
    \renewcommand\color[2][]{}%
  }%
  \providecommand\transparent[1]{%
    \errmessage{(Inkscape) Transparency is used (non-zero) for the text in Inkscape, but the package 'transparent.sty' is not loaded}%
    \renewcommand\transparent[1]{}%
  }%
  \newcommand*\fsize{\dimexpr\f@size pt\relax}%
  \newcommand*\lineheight[1]{\fontsize{\fsize}{#1\fsize}\selectfont}%
  \ifx\svgwidth\undefined%
    \setlength{\unitlength}{492.13970515bp}%
    \ifx\svgscale\undefined%
      \relax%
    \else%
      \setlength{\unitlength}{\unitlength * \real{\svgscale}}%
    \fi%
  \else%
    \setlength{\unitlength}{\svgwidth}%
  \fi%
  \global\let\svgwidth\undefined%
  \global\let\svgscale\undefined%
  \makeatother%
  \begin{picture}(1,0.44422203)%
    \lineheight{1}%
    \setlength\tabcolsep{0pt}%
    \put(0,0){\includegraphics[width=\unitlength,page=1]{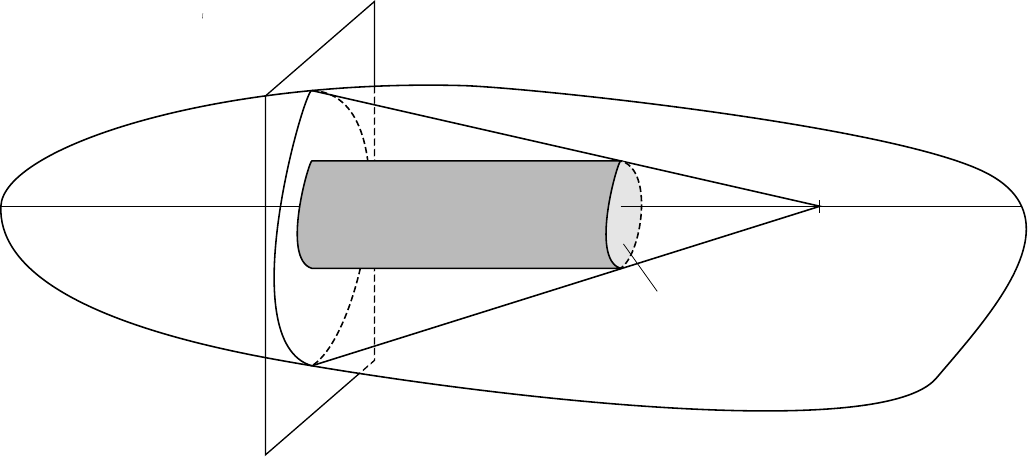}}%
  \end{picture}%
\endgroup%

}[2][1]{%
\def\svgwidth{#1\columnwidth}
\begingroup%
  \makeatletter%
  \providecommand\color[2][]{%
    \errmessage{(Inkscape) Color is used for the text in Inkscape, but the package 'color.sty' is not loaded}%
    \renewcommand\color[2][]{}%
  }%
  \providecommand\transparent[1]{%
    \errmessage{(Inkscape) Transparency is used (non-zero) for the text in Inkscape, but the package 'transparent.sty' is not loaded}%
    \renewcommand\transparent[1]{}%
  }%
  \newcommand*\fsize{\dimexpr\f@size pt\relax}%
  \newcommand*\lineheight[1]{\fontsize{\fsize}{#1\fsize}\selectfont}%
  \ifx\svgwidth\undefined%
    \setlength{\unitlength}{492.13970515bp}%
    \ifx\svgscale\undefined%
      \relax%
    \else%
      \setlength{\unitlength}{\unitlength * \real{\svgscale}}%
    \fi%
  \else%
    \setlength{\unitlength}{\svgwidth}%
  \fi%
  \global\let\svgwidth\undefined%
  \global\let\svgscale\undefined%
  \makeatother%
  \begin{picture}(1,0.44422203)%
    \lineheight{1}%
    \setlength\tabcolsep{0pt}%
    \put(0,0){\includegraphics[width=\unitlength,page=1]{CylinderInsideOm.pdf}}%
  \end{picture}%
\endgroup%

}
\theoremstyle{plain}
\newtheorem{teor}{Theorem}
\numberwithin{teor}{section}
\numberwithin{equation}{section}
\theoremstyle{definition}
\newaliascnt{defi}{teor}
\newtheorem{defi}[defi]{Definition}
\theoremstyle{plain}
\newaliascnt{lemma}{teor}
\newtheorem{lemma}[lemma]{Lemma}
\theoremstyle{plain}
\newaliascnt{prop}{teor}
\newtheorem{prop}[prop]{Proposition}
\theoremstyle{plain}
\newaliascnt{conjecture}{teor}
\theoremstyle{plain}
\newaliascnt{cor}{teor}
\theoremstyle{definition}
\newaliascnt{ex}{teor}
\theoremstyle{definition}
\newaliascnt{oss}{teor}
\newtheorem{oss}[oss]{Remark}
\theoremstyle{plain}
\newtheorem{open}{Open problem}
\DeclarePairedDelimiter{\abs}{\lvert}{\rvert}
\newcommand{\R}{\mathbb{R}}
\DeclareMathOperator{\divv}{div}
\DeclareMathOperator{\supp}{supp}
\title{An improved  version of a spectral inequality by Payne}
\author{Paolo Acampora, Emanuele Cristoforoni, Carlo Nitsch, Cristina Trombetti}
\date{}
\newcommand{\Addresses}{{
 \bigskip 
 \footnotesize 
 
 \textsc{Dipartimento di Matematica e Applicazioni ``R. Caccioppoli'', Universit\`a degli studi di Napoli Federico II, Via Cintia, Complesso Universitario Monte S. Angelo, 80126 Napoli, Italy.}\par\nopagebreak 
 
 \medskip 
 
 \textit{E-mail address}, P.~Acampora: \texttt{paolo.acampora@unina.it} 

 \medskip 
 
 \textit{E-mail address}, C.~Nitsch: \texttt{c.nitsch@unina.it}

  \medskip 
 
 \textit{E-mail address}, C.~Trombetti: \texttt{cristina@unina.it} 
 
 \medskip 
 
\textsc{Mathematical and Physical Sciences for Advanced Materials and Technologies, Scuola Superiore Meridionale, Largo San Marcellino 10, 80138, Napoli, Italy.}\par\nopagebreak 
 
 \medskip 
 
 \textit{E-mail address}, E.~Cristoforoni: \texttt{emanuele.cristoforoni@unina.it} 
}}
\begin{document}

\maketitle

\begin{abstract}
    A celebrated inequality by Payne relates the first eigenvalue of the Dirichlet Laplacian to the first eigenvalue of the buckling problem. Motivated by the goal of establishing a quantitative version of this inequality, we show that Payne’s original estimate—which is not sharp—can in fact be improved. Our result provides a refined spectral bound and opens the way to further investigations into quantitative enhancements of classical inequalities in spectral theory.
    
\textsc{Keywords:} Dirichlet eigenvalue, Buckling eigenvalue, spectral inequalities,  convex sets

\textsc{MSC 2020:} 35P15, 55J35, 31B30, 35B40
\end{abstract}

\section{Introduction}
Let $\Omega\subset\R^n$ be a bounded convex set and let $\lambda(\Omega)$ and $\Lambda(\Omega)$ be the principal eigenvalues of the Dirichlet eigenvalue problem 
\[\begin{cases}
    -\Delta u = \lambda u &\text{in }\Omega,\\[5pt]
    u=0 &\text{on }\partial \Omega,
\end{cases}\]
and of the Buckling eigenvalue problem
\[\begin{cases}
    -\Delta^2 v = \Lambda \Delta v &\text{in }\Omega,\\[5pt]
    v=\partial_\nu v = 0 &\text{on }\partial \Omega,
\end{cases}\]
where $\partial_\nu v$ denotes the normal derivative on the boundary of $\Om$, respectively. The two eigenvalues can be variationally characterized as
\[\lambda(\Omega)=\min_{u\in H^1_0(\Omega)} \dfrac{\int_\Omega \abs{\nabla u}^2\,dx}{\int_\Omega u^2\,dx }\quad \text{ and }\quad 
\Lambda(\Omega)=\min_{v\in H^2_0(\Omega)} \dfrac{\int_\Omega (\Delta v)^2\,dx}{\int_\Omega \abs{\nabla v}^2\,dx }.\]
Payne in \cite{Payne1955} (see also \cite{Payne}) proved that
\begin{equation}\label{Payneineq}\Lambda(\Omega)\le4\lambda(\Omega),\end{equation}
moreover, it was thought that the infinite strip asymptotically achieved the equality sign, and thus, the factor $4$ could not be improved (see for instance \cite{Payne1955} or \cite[Remark 8.13 (iv)]{AshGesztMitrShtTreschl2013}). However, in the present paper, we will prove the following.
\begin{teor}\label{main}  Let $n\ge2$. There exists $C_n<4$ such that for every convex set $\Om\subseteq\R^n$
  \[
\La(\Om)\le C_n \la(\Om).
  \]
\end{teor}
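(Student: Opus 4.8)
The plan is to sharpen the classical test function behind \eqref{Payneineq}. Let $u>0$ be the first Dirichlet eigenfunction of the convex set $\Om$. Since $u^2$ and $\na(u^2)=2u\,\na u$ vanish on $\pa\Om$, for every unit vector $\xi$ and every $\kappa\ge0$ the complex function $v=u^2\,\e^{i\kappa\,\xi\cdot x}$ belongs to $H^2_0(\Om)$ and is admissible in the variational characterisation of $\La(\Om)$. From $\na v=\bigl(\na(u^2)+i\kappa u^2\xi\bigr)\e^{i\kappa\xi\cdot x}$ and $\Delta v=\bigl(\Delta(u^2)-\kappa^2u^2+2i\kappa\,\na(u^2)\cdot\xi\bigr)\e^{i\kappa\xi\cdot x}$, the mixed terms in $\abs{\na v}^2$ and $\abs{\Delta v}^2$ are purely imaginary and integrate to zero, whence
\[
\La(\Om)\le\frac{\int_\Om(\Delta(u^2))^2+\bigl(2\int_\Om\abs{\na(u^2)}^2+4\int_\Om(\na(u^2)\cdot\xi)^2\bigr)\kappa^2+\kappa^4\int_\Om u^4}{\int_\Om\abs{\na(u^2)}^2+\kappa^2\int_\Om u^4}.
\]
Into this I insert the two standard facts about the first eigenfunction of a convex set: the integration-by-parts identity $\int_\Om u^2\abs{\na u}^2=\tfrac{\la}{3}\int_\Om u^4$, and the estimate $\int_\Om\abs{\na u}^4\le\la^2\int_\Om u^4$ which underlies \eqref{Payneineq}. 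Choosing $\xi$ to minimise $\int_\Om(\na(u^2)\cdot\xi)^2$ --- a bottom eigenvector of $M:=\int_\Om\na(u^2)\otimes\na(u^2)$, so that this minimum equals $\lambda_{\min}(M)\le\tfrac1n\tr M=\tfrac1n\int_\Om\abs{\na(u^2)}^2$ --- and writing $\kappa^2=\la\sigma$, everything collapses to
\[
\La(\Om)\le\la(\Om)\,\min_{\sigma\ge0}\frac{3\sigma^2+8(1+2\gamma)\sigma+12\tau+4}{3\sigma+4},
\qquad
\tau:=\frac{\int_\Om\abs{\na u}^4}{\la^2\int_\Om u^4}\le1,\quad
\gamma:=\frac{\min_\xi\int_\Om u^2(\na u\cdot\xi)^2}{\int_\Om u^2\abs{\na u}^2}\in\Bigl[0,\tfrac1n\Bigr].
\]

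Carrying out the elementary minimisation in $\sigma$ gives $\La(\Om)\le\la(\Om)\,F(\tau,\gamma)$, where $F(\tau,\gamma)=3\tau+1$ when $9\tau-16\gamma\le5$ and $F(\tau,\gamma)=\tfrac43\bigl(\sqrt{9\tau-16\gamma-1}+4\gamma\bigr)$ when $9\tau-16\gamma\ge5$; the value $\sigma=0$ is just the test function $u^2$ and reproduces $3\tau+1\le4$. Now $F$ is increasing in $\tau$, so $F(\tau,\gamma)\le F(1,\gamma)$, and $F(1,\gamma)=\tfrac43\bigl(\sqrt{8-16\gamma}+4\gamma\bigr)<4$ for every $\gamma<\tfrac14$ (the inequality squares to $(1-4\gamma)^2>0$). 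Hence for $n\ge5$, where $\gamma\le\tfrac1n<\tfrac14$ automatically, the argument is already complete with the explicit constant
\[
C_n=F\Bigl(1,\tfrac1n\Bigr)=\frac{8}{3}\sqrt{2-\frac4n}+\frac{16}{3n}<4,
\]
which increases to $\tfrac{8\sqrt2}{3}\approx3.77$ as $n\to\infty$ --- the value produced by oscillating $\e^{i\kappa\xi\cdot x}$ against a purely one-dimensional sinusoidal profile, i.e. on Payne's conjectural extremal slab.

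For $n=2,3,4$ one more input is needed: there $\gamma$ may be as large as $\tfrac1n\ge\tfrac14$, and on the admissible box $\{\,\tau\le1,\ 0\le\gamma\le\tfrac1n\,\}$ one has $\sup F=4$, attained only in the corner $\tau=1,\ \gamma\ge\tfrac14$. That corner is unreachable, for $\tau=1$ is equality in Payne's inequality, which is rigid: it forces $u$ to be a function of a single linear coordinate (hence $\Om$ a slab) and in particular $\na u$ parallel to a fixed direction, i.e. $\gamma=0$. What is needed is the quantitative counterpart --- a modulus $\omega$ with $\omega(0^+)=0$ and $1-\tau\le\eta\Rightarrow\gamma\le\omega(\eta)$. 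Granting it, $F(\tau,\gamma)\le3\tau+1<4$ for $\tau\le1-\eta$, while for $\tau>1-\eta$ we have $\gamma\le\omega(\eta)$ small and $F(\tau,\gamma)\le F(1,\omega(\eta))<4$ with $F(1,\omega(\eta))\to\tfrac{8\sqrt2}{3}$ as $\eta\to0$; choosing $\eta$ gives $C_n<4$. I would obtain $\omega$ by compactness: a sequence of convex sets $\Om_j$ with $1-\tau(\Om_j)\to0$ but $\gamma(\Om_j)\ge\gamma_0>0$, normalised so that $\la(\Om_j)=1$ and $u_j$ peaks at the origin, has the first eigenfunctions controlled well enough that a subsequence converges --- possibly after passing to a rescaled window or to a slab/lower-dimensional limit --- to a nonzero solution of a limiting eigenvalue problem that still saturates $\tau=1$, hence is one-dimensional, hence has $\gamma=0$, contradicting $\gamma_j\ge\gamma_0$.

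The test-function computation of the first two paragraphs is routine once $u^2\e^{i\kappa\xi\cdot x}$ is chosen; the main obstacle is the last step for $n\le4$, namely making the rigidity of Payne's inequality quantitative --- equivalently, keeping uniform control of the first Dirichlet eigenfunction as the convex set concentrates, spreads out, or degenerates to lower dimension, so as to conclude that near-equality in Payne's bound forces the eigenfunction to be asymptotically one-dimensional and hence $\gamma\to0$.
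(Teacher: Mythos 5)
Your oscillating test-function computation is correct and is in fact the same device the paper uses: its \autoref{lemma} tests with $u^2\cos(\sqrt{\mu}\,x\cdot\nu)$ and $u^2\sin(\sqrt{\mu}\,x\cdot\nu)$, which is exactly your $u^2\e^{i\kappa\,\xi\cdot x}$, and your $n\ge5$ constant $\frac83\sqrt{2-4/n}+\frac{16}{3n}$ coincides with the paper's bound \eqref{largedimPayne}. But for $n=2,3,4$ your argument is not a proof: everything hinges on the asserted modulus $\omega$ (``$1-\tau$ small $\Rightarrow\gamma$ small''), which you only sketch by compactness. That step is the actual crux and it is genuinely delicate: there is no compactness in the class of convex domains even after normalising $\la=1$ (the sets may become arbitrarily elongated or collapse onto lower-dimensional sets); the would-be limit saturating $\tau=1$ is necessarily unbounded (a slab), since no bounded convex domain attains equality in Payne, so one must set up a limiting eigenvalue problem on a degenerate domain; and one must prove convergence not just of eigenfunctions but of the quartic ratios $\tau_j,\gamma_j$, which requires strong $L^4$ control of $\nabla u_j$ and ruling out loss of mass in long thin domains. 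None of this is routine, and even the qualitative rigidity for the limit object would need proof. In short, the key lemma is assumed, not established.

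The paper never proves your lemma; it sidesteps it with two explicit inequalities governed by the geometric quantity $\mathcal{T}(\Om)=(n-1)\pi^2/(D(\Om)^2\la(\Om))$. First, the Andrews--Clutterbuck improved log-concavity of $u$ yields a lower bound on the mean curvature of level sets and hence \eqref{improvedPayne}, $\La(\Om)\le(4-2\mathcal{T}(\Om))\la(\Om)$ --- in your notation $\tau\le1-\tfrac23\mathcal{T}(\Om)$, i.e.\ precisely the quantitative strengthening of $\tau\le1$ that your argument lacks --- which settles all sets with $\mathcal{T}$ bounded away from $0$. Second, for thin sets the paper does not attempt to show that $\gamma(\Om)$ is small; it inscribes a cylinder $(1-z)\Om'_t\times(t,t+zD(\Om)/2)$ in $\Om$, on which the eigenfunction factorises so that the directional term is explicit, and invokes Beck's inequality $\inf_t\tilde\la(\Om'_t)\le\la(\Om)$ to obtain \eqref{thinPayne}. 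A trivial dichotomy in $\mathcal{T}(\Om_k)$ then gives $C_n<4$ in every dimension $n\ge2$, with no compactness argument at all. To complete your proof you would either have to carry out the quantitative rigidity in full (a substantial piece of analysis) or import these two explicit estimates, at which point you would be reproducing the paper's proof.
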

In the case $n=1$, inequality \eqref{Payneineq} always holds with the equality sign. Indeed if $\Om$ is the interval $(0,L)$, as it it well known
\[u(x)=\sin\left(\dfrac{\pi}{L}x\right),\quad\text{and}\quad\lambda(\Om)=\dfrac{\pi^2}{L^2},\]
while, by explicit computations, one can prove that
\[v(x)=\dfrac{1}{2}\left(1-\cos\left(2\dfrac{\pi}{L}x\right)\right),\quad\text{and}\quad\Lambda(\Om)=4\dfrac{\pi^2}{L^2}=4\lambda(\Om).\]
We notice that in this case 
\[v(x) =\dfrac{1}{2}\left(1-\cos\left(2\dfrac{\pi}{L}x\right)\right) =\sin^2\left(\dfrac{\pi}{L}x\right) = u^2(x).\]
The previous relation between the eigenfunctions of the one-dimensional Dirichlet and the Buckling eigenvalue problems is the main idea in the proof of inequality \eqref{Payneineq}.\medskip

Lastly, we point out that inequalities of the type \eqref{Payneineq} also hold true in the context of Riemannian manifolds under positivity assumptions of the Ricci tensor (see \cite{IliasShouman,HuangMa}). 
 
\section{Computations for an Infinite Strip}

To have an idea of why the factor $4$ in \eqref{Payneineq} was thought to be sharp and why it is not, let $n=2$ and let $H=\R\times(0,\pi)$ be an infinite strip with $\lambda(H)=1$, and consider the boundary value problem
\begin{equation}\label{stripbuckling}\begin{cases}
    -\Delta^2 w = \Lambda \Delta w &\text{in }H,\\[5pt]
    w=\partial_y w = 0 &\text{if }y=0,\pi.
\end{cases}\end{equation}
In analogy to what happens with the Laplacian, one may think that the smallest $\Lambda$ for which a non-trivial solution exists is achieved among functions that are constant with respect to the variable $x$, hence reducing the boundary value problem to the ODE 
\begin{equation}\label{auxiliaryeigprob1d0}\begin{cases}
    - w^{({\mathrm{\romannumeral 4}})} = \Lambda w'' &\text{in }(0,\pi),\\[5pt]
    w(0)=w(\pi)=w'(0)=w'(\pi) = 0 
\end{cases}\end{equation}
which admits non-trivial solutions if and only if $\Lambda>0$ is a solution to 
\begin{equation}\label{implicitlambda0}
    2\ml(\cos\ml(\sqrt\Lambda \pi\mr)-1\mr)+\sqrt\La \pi \sin\ml(\sqrt\La \pi\mr)=0.
\end{equation}
For such $\La$, the solutions to the equation \eqref{auxiliaryeigprob1d0} are proportional to
\[w(y)=-\ml(\sqrt\La \pi -\sin\ml(\sqrt\La \pi\mr)\mr)\ml(1-\cos\ml(\sqrt\La y\mr)\mr)+\ml(1-\cos\ml(\sqrt\La \pi\mr)\mr)\ml(\sqrt\La y -\sin\ml(\sqrt\La y\mr)\mr).\]
One can verify that the smallest positive solution to \eqref{implicitlambda0} is $\La=4$, hence, the smallest eigenvalue of the strip would indeed be $\La(H)=4=4\la(H)$. However, if we do not impose $u$ to be constant with respect to $x$ we can find different solutions with possibly smaller eigenvalues. For example, for every $\mu\ge0$, we can look for solutions of the type
\[w(x,y)=h(y)\cos(\sqrt{\mu}x)\] 
which exist if and only if $h$ is a solution to
\begin{equation}\label{auxiliaryeigprob1d}\begin{cases}
    - (h''-\mu h)'' = (\Lambda-\mu) (h''-\mu h) &\text{in }(0,\pi),\\[5pt]
    h(0)=h(\pi)=h'(0)=h'(\pi) = 0.
\end{cases}\end{equation}
The previous equation admits non-trivial solutions if and only if $\Lambda\ge\mu$ is a solution to
\begin{equation}\label{implicitlambda} 2\sqrt{\mu}\sqrt{\Lambda-\mu}\left(\cosh\left(\sqrt{\mu}\pi\right)\cos\left(\sqrt{\Lambda-\mu}\pi\right)-1\right)+(\Lambda-2\mu)\sinh\left(\sqrt{\mu}\pi\right)\sin\left(\sqrt{\Lambda-\mu}\pi\right)=0.
\end{equation}
For such $\La$, the solutions to equation \eqref{auxiliaryeigprob1d} are proportional to
\[h(y)=-\ddfr{\cosh\left(\sqrt{\mu}y\right)-\cos\left(\sqrt{\Lambda-\mu}\,y\right)}{\cosh\left(\sqrt{\mu}\pi\right)-\cos\left(\sqrt{\Lambda-\mu}\,\pi\right)}+\ddfr{\sqrt{\Lambda-\mu}\sinh\left(\sqrt{\mu}y\right)-\sqrt{\mu}\sin\left(\sqrt{\Lambda-\mu}\,y\right)}{\sqrt{\Lambda-\mu}\sinh\left(\sqrt{\mu}\pi\right)-\sqrt{\mu}\sin\left(\sqrt{\Lambda-\mu}\,\pi\right)}.
\]
The same ODE was also studied in \cite{BuosoPArini} to better understand the Buckling eigenvalue of an annulus in the limit when the difference in radii goes to zero. For every $\mu\ge0$, let $\Lambda_\mu$ be the smallest solution to \eqref{implicitlambda}.
From a variational point of view, we have
\begin{equation}\label{Stripeig}\Lambda_\mu = \min_{h\in H^2_0(0,\pi)}\Set{\mu+\ddfr{\int_0^\pi (h'')^2\,dy+\mu\int_0^\pi (h')^2\,dy}{\int_0^\pi (h')^2\,dy+\mu\int_0^\pi h^2\,dy}}.\end{equation}
Which can be estimated from above using $h(y)=\sin^2(y)$ as a test function, obtaining
\[\Lambda_\mu\le \mu + \dfrac{16+4\mu}{4+3\mu},\quad
\text{and}\quad
\min_{\mu\ge0} \Lambda_\mu \le \dfrac{8}{3}\sqrt{2}.\]
Numerically, the minimum with respect to $\mu$ of $\Lambda_\mu$ can be estimated to be 
\[\min_{\mu\ge0}\Lambda_\mu\approx3.7570\dots\]
Hence, the couple $w(y)=\sin^2(y)$ and $\Lambda=4$ is a solution to the boundary value problem \eqref{stripbuckling}, but $4$ is not the smallest value of $\Lambda$ for which a non-trivial solution exists. Namely, if we define the buckling eigenvalue of an unbounded set as the infimum of the eigenvalues of its bounded open subsets, we can formalize the previous observations to have the following
\begin{teor}
    Let $L>0$, and let $H=\R^{n-1}\times(0,L)$ be an infinite strip in $\R^n$, then
    \[\Lambda(H) \le \sigma \lambda(H),\]
    where 
    \[\sigma := \min_{\mu\ge0} \Lambda_{\mu}<4.\]
\end{teor}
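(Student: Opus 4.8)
\emph{Proof strategy.} The plan is to realise $\La(H)$ as a limit of buckling eigenvalues of very long boxes, using as test functions truncations of the quasimodes $w(x,y)=h(y)\cos(\sqrt{\mu}\,x_1)$ that appear in \eqref{auxiliaryeigprob1d}. Since $\La$ and $\la$ are both homogeneous of degree $-2$ under dilations, the ratio $\La(H)/\la(H)$ does not depend on $L$, so I would first normalise $L=\pi$; then $\la(H)=1$, the bound $\la(H)\ge1$ being the one-dimensional Poincar\'e inequality applied slice-wise to the zero-extension of any admissible function, and $\la(H)\le1$ following by testing $\prod_i\cos\bigl(\tfrac{\pi x_i}{2R}\bigr)\sin y$ on $(-R,R)^{n-1}\times(0,\pi)$ and letting $R\to\infty$. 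By the definition of the buckling eigenvalue of an unbounded set as the infimum of $\La(\Om)$ over bounded open $\Om\subset H$, it suffices to produce, for each fixed $\mu\ge0$ and each $\eps>0$, a bounded box $\Om\subset H$ on which the buckling Rayleigh quotient of a suitable test function is $\le\La_\mu+\eps$; minimising over $\mu$ and sending $\eps\to0$ then yields $\La(H)\le\sigma\la(H)$, while the strict inequality $\sigma<4$ is already recorded above, since $h(y)=\sin^2 y$ in \eqref{Stripeig} gives $\La_\mu\le\mu+\frac{16+4\mu}{4+3\mu}$, whose minimum over $\mu\ge0$ is $\frac{8}{3}\sqrt{2}<4$.

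For the construction, fix $\mu\ge0$ and $\eps>0$. Using the density of $C^\infty_c(0,\pi)$ in $H^2_0(0,\pi)$ and the continuity of the quotient in \eqref{Stripeig}, choose $g\in C^\infty_c(0,\pi)$, $g\not\equiv0$, with $\mu+\frac{\int_0^\pi(g'')^2\,dy+\mu\int_0^\pi(g')^2\,dy}{\int_0^\pi(g')^2\,dy+\mu\int_0^\pi g^2\,dy}\le\La_\mu+\eps$. For $R>1$ take a cutoff $\eta_R\in C^\infty_c((-R,R))$ with $0\le\eta_R\le1$, $\eta_R\equiv1$ on $[-R+1,R-1]$ and $\|\eta_R'\|_\infty,\|\eta_R''\|_\infty\le C$ for a constant $C$ independent of $R$, and on $\Om_R:=(-R,R)^{n-1}\times(0,\pi)$ set
\[ v_R(x_1,\dots,x_{n-1},y):=\Big(\prod_{i=1}^{n-1}\eta_R(x_i)\Big)\cos\bigl(\sqrt{\mu}\,x_1\bigr)\,g(y)\in C^\infty_c(\Om_R)\subset H^2_0(\Om_R). \]
On the bulk region $B_R$ where every $\eta_R(x_i)$ equals $1$ one has $v_R=\cos(\sqrt{\mu}\,x_1)g(y)$, so $\De v_R=\cos(\sqrt{\mu}\,x_1)(g''-\mu g)$ and $|\na v_R|^2=\mu\sin^2(\sqrt{\mu}\,x_1)g^2+\cos^2(\sqrt{\mu}\,x_1)(g')^2$; since $B_R\supseteq[-R+1,R-1]^{n-1}\times(0,\pi)$, the bulk contributes $a_R\,\bigl(\int_0^\pi(g''-\mu g)^2\,dy\bigr)(1+o(1))$ to $\int_{\Om_R}(\De v_R)^2$ and $a_R\,\bigl(\int_0^\pi(\mu g^2+(g')^2)\,dy\bigr)(1+o(1))$ to $\int_{\Om_R}|\na v_R|^2$, where $a_R>0$ is comparable to $(2R)^{n-1}$, while the collar $\Om_R\setminus B_R$ has measure $O(R^{n-2})$ and, by the uniform bounds on $\eta_R',\eta_R''$ and the boundedness of $g,g',g''$ on $[0,\pi]$, bounded integrands, hence contributes $O(R^{n-2})$ to each integral. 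Therefore
\[ \La(\Om_R)\le\frac{\int_{\Om_R}(\De v_R)^2}{\int_{\Om_R}|\na v_R|^2}\xrightarrow[R\to\infty]{}\frac{\int_0^\pi(g''-\mu g)^2\,dy}{\int_0^\pi\bigl((g')^2+\mu g^2\bigr)\,dy}, \]
and expanding $(g''-\mu g)^2$ and integrating by parts (using $g(0)=g(\pi)=0$, so that $\int_0^\pi(g''-\mu g)^2=\int_0^\pi(g'')^2+2\mu\int_0^\pi(g')^2+\mu^2\int_0^\pi g^2$) identifies the last fraction with $\mu+\frac{\int_0^\pi(g'')^2\,dy+\mu\int_0^\pi(g')^2\,dy}{\int_0^\pi(g')^2\,dy+\mu\int_0^\pi g^2\,dy}\le\La_\mu+\eps$. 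Hence $\La(H)\le\La_\mu+\eps$ for every $\mu\ge0$ and every $\eps>0$, which finishes the proof.

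The only genuinely delicate step is the bookkeeping in the truncation: one has to verify that the collar contributions to both the numerator and the denominator are really of lower order $O(R^{n-2})$ against the $O(R^{n-1})$ bulk, which is exactly where the uniform bounds on $\eta_R'$, $\eta_R''$ and the boundedness of $g$, $g'$, $g''$ on $[0,\pi]$ are used. Everything else is either definitional or the explicit algebra already carried out around \eqref{implicitlambda}--\eqref{Stripeig}; in particular the box construction works verbatim in every dimension $n\ge2$, because on the bulk the integrands do not depend on the transverse variables $x_2,\dots,x_{n-1}$.
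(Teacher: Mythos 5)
Your proposal is correct and follows essentially the same route as the paper: both prove $\La(H)\le\La_\mu$ by testing the buckling Rayleigh quotient on long boxes exhausting the strip with cut-off versions of $h(y)\cos(\sqrt\mu\,x_1)$, and both get $\sigma<4$ from the $\sin^2$ test function bound $\La_\mu\le\mu+\frac{16+4\mu}{4+3\mu}$. The only (harmless) differences are that you use a smooth approximate minimizer $g$ and direct averaging of $\cos^2,\sin^2$ over the long direction, whereas the paper uses the exact minimizer and periodic cell bookkeeping, and you carry out the general $n$ and the normalization $\la(H)=1$ explicitly.
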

\begin{proof}
Starting from the solutions to \eqref{auxiliaryeigprob1d}, we can construct appropriate test functions on a monotone sequence $\Om_k$ of parallelepipeds to show that for every $\mu>0$
\[\lim_{k} \dfrac{\Lambda(\Om_k)}{\lambda(\Om_k)}\le \Lambda_\mu,\]
where $\Lambda_\mu$ is  defined in \eqref{Stripeig}.
We will show the result for $n=2$, as it is analogous for $n\ge3$. Since the ratio 
\[\dfrac{\Lambda(\Om)}{\lambda(\Om)}\]
is scaling invariant, we can assume $L=\pi$ so that $\lambda(H)=1$. For every $\mu>0$ let $h$ be a minimizer to \eqref{Stripeig} with 
\[\int_0^\pi\ml(h'\mr)^2\,dy+\mu\int_0^\pi h^2\,dy=\dfrac{2\pi}{\sqrt{\mu}}.\] Let $L_\mu=\pi/\sqrt{\mu}$ and $R_1=(0,L_\mu)\times(0,\pi)$, and notice that the function $w_1(x,y)=h(y)\cos(\sqrt{\mu}x)$ solves
\[
\begin{dcases}
  -\Delta^2 w_1 = \Lambda_\mu \Delta w_1 &\text{in }R_1,\\[5pt]
  w_1=\partial_y w_1 = 0 &\text{if }y=0,\pi, \\ 
  \partial_x w_1 =\partial_x \De w_1= 0 &\text{if }x=0,L_\mu.
\end{dcases}
\]
In particular, 
\[
  \int_{R_1} (\De w_1)^2\,dx =\La_\mu \int_{R_1}\abs{\na w_1}^2\,dx
\]
and, by construction,
\[
\int_{R_1}\abs{\na w_1}^2\,dx = 1.
\]
The idea is to extend $w_1$ periodically on the infinite strip $H$. Let $k$ be a natural number, and consider the sequence of rectangles 
\[
\Omega_k=(-kL_\mu,kL_\mu)\times(0,\pi).
\]
Let $\varphi_k$ be a sequence of smooth cut-off functions, equi-bounded in $C^2$, such that \[\varphi_k(x)=1\quad\text{ when }\quad \abs{x}\le(k-1)L_\mu,\] and with $\supp \varphi \subset (-kL_\mu,k L_\mu)$. We define $w_k(x,y)=h(y)\cos(\sqrt{\mu}x)\varphi_k(x)$, so that $w_k\in H^2_0(\Om_k)$. We notice that, by periodicity, for every $j$ between $-k+2$ and $k-1$ we have that on the rectangle $R_j=((j-1)L_\mu,jL_\mu)\times (0,\pi)$ it holds
\[
\int_{R_j} (\De w_k)^2\,dx = \int_{R_1}(\De w_1)^2\,dx = \La_\mu \int_{R_1}\abs{\na w_1}^2\,dx = \La_\mu \int_{R_j}\abs{\na w_k}^2\,dx,
\]
and in particular, 
\[
  \int_{\Om_k} (\De w_k)^2\,dx = 2(k-1)\La_\mu + Q_1
\] 
where $Q_1$ is a remainder uniformly bounded with respect to $k$. Analogously
\[
  \int_{\Om_k} \abs{\na w_k}^2\,dx = 2(k-1)+ Q_2
\]
where $Q_2$ is uniformly bounded as well. Hence, using $w_k$ as a test function for $\La(\Om_k)$, we get
\[\Lambda(\Om_k)\le\frac{\int_{\Om_k} \ml(\Delta w_k\mr)^2\,dx\,dy}{\int_{\Om_k}\abs{\nabla w_k}^2\,dx\,dy}\le \fr{2(k-1)\La_\mu +Q_1}{2(k-1)+Q_2}.\]
Then, passing to the limit as $k$ goes to infinity, and using the fact that $\Om_k$ is an increasing sequence converging to $H$, we have that
\[\lim_{k} \Lambda(\Om_k)\le \Lambda_\mu.\]
Hence, recalling that $\sigma=\min_{\mu\ge0} \Lambda_\mu$ and that $\la(H)=1$, we have
\[\Lambda(H)\le\sigma \lambda(H).\]
\end{proof}

\section{Proof of the main result}
In the following, we will use the improved $\log$-concavity estimate proved in \cite[Theorem 1.5]{AndClutt2011} for positive Dirichlet eigenfunctions 
 \begin{teor}[Improved $\log$-concavity]
  \label{thm: improvedLog}
Let $\Om\sbse \R^n$ be a bounded convex open set, let $D(\Om)$ be its diameter, and let $u$ be a positive first eigenfunction associated to $\la(\Om)$. If $v=\log(u)$, then for every $\eta\in\R^n$ and for every $x\in \Om$ we have
\[
  -\langle D^2v(x)\eta,\eta\rangle\ge \fr{\pi^2}{D(\Om)^2}\abs{\eta}^2 .
\]
 \end{teor}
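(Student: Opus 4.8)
\section*{Proof proposal}

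The estimate is the Andrews--Clutterbuck modulus-of-concavity bound, and the route I would take is their two-point maximum principle run along the Dirichlet heat flow, comparing $u$ with the one-dimensional model on an interval of length $D:=D(\Om)$. Set $v=\log u$; the eigenvalue equation becomes the nonlinear elliptic equation $\Delta v+|\nabla v|^2=-\la(\Om)$ in $\Om$, with $v\to-\infty$ on $\pa\Om$ and, by Hopf's lemma, $\nabla v$ pointing inward with $|\nabla v|\asymp 1/\dist(\cdot,\pa\Om)$ near the boundary. To have a maximum principle available, I would realize $u$ as the large-time limit (after rescaling by $\e^{\la t}$) of the solution of $u_t=\Delta u$ in $\Om$ with Dirichlet data and smooth positive initial datum; then $v(\cdot,t)=\log u(\cdot,t)$ solves $v_t=\Delta v+|\nabla v|^2$, the rescaled flow converges in $C^1_{\loc}(\Om)$ to a multiple of the eigenfunction, and it suffices to prove the Hessian bound for $v(\cdot,t)$ uniformly in $t$. (Alternatively one can run the same two-point argument directly on the nonlinear elliptic equation, whose maximum principle is available, sidestepping the flow.)

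The model is $\vp(s)=\log\cos(\pi s/D)$ on $(-D/2,D/2)$: it is even, $\vp'$ is odd, strictly decreasing, with $\vp'(0)=0$ and $\vp''(0)=-\pi^2/D^2$, and --- the crucial point --- $\vp''+(\vp')^2\equiv-\pi^2/D^2$, which is precisely the one-dimensional version of the equation for $v$ with eigenvalue $\pi^2/D^2$ (it is the logarithm of the first Dirichlet eigenfunction of an interval of length $D$). For $x\ne y$ in $\Om$, put $e:=(y-x)/|y-x|$, $d:=|y-x|$, and define the two-point function
\[
Z(x,y,t):=\bigl\langle \nabla v(y,t)-\nabla v(x,t),\,e\bigr\rangle-2\,\vp'\!\left(\tfrac{d}{2}\right).
\]
A Taylor expansion as $y\to x$ along $e$ gives $Z=d\bigl(\langle D^2v(x)e,e\rangle+\pi^2/D^2\bigr)+O(d^2)$, so the inequality $Z\le 0$ on $\Om\times\Om\times[0,\infty)$ is \emph{equivalent} to $-\langle D^2v\,\eta,\eta\rangle\ge(\pi^2/D^2)|\eta|^2$ once one lets $t\to\infty$. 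The plan is therefore to show that $Z\le 0$ is preserved by the flow; at $t=0$ it holds provided $\log u_0$ is at least as log-concave as $\vp$, or --- more robustly, following Andrews--Clutterbuck --- one replaces the fixed $\vp$ by the evolving logarithm of the one-dimensional Dirichlet heat flow on $(-D/2,D/2)$, which converges to $\vp$ and for which only concavity of $\log u_0$ (Brascamp--Lieb) is needed initially.

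Suppose $Z$ first reaches the value $0$ at some time $t_0$, attained at an interior point $(x_0,y_0)$. The \emph{boundary step} rules out $x_0$ or $y_0\in\pa\Om$: as $x\to\pa\Om$ one has $\langle\nabla v(x),e\rangle\to+\infty$ (inward blow-up of $\nabla v$ paired with the inward-pointing $e$), and symmetrically $\langle\nabla v(y),e\rangle\to-\infty$ as $y\to\pa\Om$; one must check that these dominate the singularity $-2\vp'(d/2)\to+\infty$ as $d\uparrow D$. Using $|\nabla v|\asymp 1/\dist(\cdot,\pa\Om)$, the fact that $u$ vanishes exactly linearly (so $|\nabla u|/u\sim 1/\dist$ with consistent normalizing constant at the boundary), and the elementary geometric fact that a segment between two interior points of length near $D$ forces both endpoints near $\pa\Om$ along a near-diameter direction, which is normal to $\pa\Om$ at both ends, the two singularities cancel to leading order and leave $\ls_{\pa}Z\le 0$; the diagonal $x_0=y_0$, where $Z\equiv 0$, is excluded for the same Taylor-expansion reason. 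The \emph{interior step} is the technical heart: at $(x_0,y_0,t_0)$ one records the first-order conditions $\nabla_{(x,y)}Z=0$, the second-order condition $D^2_{(x,y)}Z\le 0$, and $\pa_tZ\ge0$, then computes $\pa_tZ$ minus a suitable second-order elliptic operator (obtained by linearizing the equation at the two points) applied to $Z$, substituting $v_t=\Delta v+|\nabla v|^2$ at $x_0$ and $y_0$ and the identity $\vp''+(\vp')^2=-\pi^2/D^2$; the first-order conditions are used to simplify the mixed second derivatives, and the remainder turns out strictly negative, contradicting $\pa_tZ\ge0$ at a spatial maximum. Letting $t\to\infty$ (using $C^1_{\loc}$ convergence) and then $y\to x$ yields the theorem.

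I expect the two main obstacles to be exactly these steps: the interior computation, where one must track how the tangential and normal components of $D^2v$ at the two points enter and extract the \emph{sharp} constant from the model ODE; and the boundary step, where the blow-up of $\nabla v$ near $\pa\Om$ must be shown to beat the singularity of $\vp'$ at $\pm D/2$ --- the borderline nature of this cancellation (equality approached along near-diameter configurations, and attained in the infinite slab) being precisely why $\pi^2/D^2$ is optimal. Taking $\vp\equiv0$ throughout reproduces the Brascamp--Lieb log-concavity $D^2v\le0$ with no boundary difficulty, so the entire content of \autoref{thm: improvedLog} lies in the quantitative upgrade that the nonlocal two-point comparison makes possible.
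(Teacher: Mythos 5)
This theorem is not proved in the paper at all: it is quoted directly from Andrews--Clutterbuck \cite[Theorem 1.5]{AndClutt2011}, and your outline is a faithful sketch of precisely that source's argument (the two-point gradient comparison $Z\le 0$ with the one-dimensional model $\varphi(s)=\log\cos(\pi s/D)$, propagated by a maximum principle along the Dirichlet heat flow and then infinitesimalized by letting $y\to x$). The only imprecision is your claim that $Z\le 0$ is \emph{equivalent} to the Hessian bound --- the two-point estimate is strictly stronger, since integrating the Hessian bound only yields $\langle\nabla v(y)-\nabla v(x),e\rangle\le-\pi^2 d/D^2$ rather than $\le 2\varphi'(d/2)=-(2\pi/D)\tan(\pi d/(2D))$ --- but you only use the implication in the correct direction, so this does not affect the argument.
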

 
\begin{prop}[Improved Payne inequality]\label{prop:improvedPayne}
Let $n\ge2$ and let $\Om\subset\R^n$ be a bounded convex set. Then
\begin{equation}\label{improvedPayne}\Lambda(\Omega)\le \left(4-2\mathcal{T}(\Om)\right)\lambda(\Omega),\end{equation}
where 
\[\mathcal{T}(\Om)=\dfrac{(n-1)\pi^2}{D(\Om)^2\lambda(\Om)}.\] 
\end{prop}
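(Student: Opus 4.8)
The plan is to mimic Payne's original argument but to exploit the improved log-concavity estimate (Theorem~\ref{thm: improvedLog}) as the source of a quantitative gain. Let $u>0$ be a first Dirichlet eigenfunction on $\Om$, normalized in $L^2$, so that $-\Delta u=\lambda(\Om)u$ in $\Om$ and $u=0$ on $\partial\Om$. Following Payne, I would take $v=u^2$ as a test function for the buckling quotient: clearly $v\in H^2_0(\Om)$ since $u\in H^1_0(\Om)$ is a smooth positive eigenfunction vanishing on the boundary, so $v=u^2$ and $\nabla v=2u\nabla u$ both vanish on $\partial\Om$. One computes
\[
\Delta v = 2u\,\Delta u + 2|\nabla u|^2 = -2\lambda(\Om) u^2 + 2|\nabla u|^2,
\]
so that
\[
\int_\Om (\Delta v)^2\,dx = 4\int_\Om \bigl(\lambda(\Om) u^2 - |\nabla u|^2\bigr)^2\,dx,
\qquad
\int_\Om |\nabla v|^2\,dx = 4\int_\Om u^2|\nabla u|^2\,dx.
\]
Expanding the square in the numerator and integrating by parts (using $-\Delta u=\lambda u$ and the boundary conditions), Payne's identity gives
\[
\int_\Om \bigl(\lambda(\Om) u^2 - |\nabla u|^2\bigr)^2\,dx
= \int_\Om |\nabla u|^4\,dx - \lambda(\Om)^2\int_\Om u^4\,dx,
\]
which already yields $\Lambda(\Om)\le 4\lambda(\Om)$ after a further application of the Cauchy–Schwarz / integration-by-parts step that bounds $\int|\nabla u|^4$. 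The task is to extract a strictly smaller constant.

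The key new input is to write everything in terms of $v=\log u$, for which Theorem~\ref{thm: improvedLog} provides the pointwise bound $-\langle D^2 v(x)\eta,\eta\rangle \ge \tfrac{\pi^2}{D(\Om)^2}|\eta|^2$ for all $\eta$. Since $\nabla u = u\nabla v$ and $\Delta u = u(\Delta v + |\nabla v|^2)$, the eigenvalue equation becomes $\Delta v + |\nabla v|^2 = -\lambda(\Om)$ pointwise in $\Om$. I would use this to rewrite the crucial integral: the quantity $\lambda(\Om) u^2 - |\nabla u|^2 = u^2(\lambda(\Om) - |\nabla v|^2) = -u^2(\Delta v + 2|\nabla v|^2)$, and more usefully, taking a trace in the Bochner-type identity or simply contracting the Hessian bound, one gets $-\Delta v \ge \tfrac{n\pi^2}{D(\Om)^2}$ everywhere — but that is not quite what is needed. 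Instead, the sharper route is to keep the extra term that the standard proof throws away: when one integrates by parts to estimate $\int_\Om |\nabla u|^4\,dx$, a term involving $\int_\Om u^2 \langle D^2 v \,\nabla u, \nabla u\rangle$-type expressions (equivalently $\int_\Om u^4 \langle D^2 v\, \nabla v,\nabla v\rangle\,dx$, up to the $|\nabla v|^2$ weighting) appears, and by Theorem~\ref{thm: improvedLog} this term has a definite sign with a quantitative lower bound proportional to $\tfrac{\pi^2}{D(\Om)^2}\int_\Om u^4|\nabla v|^2\,dx = \tfrac{\pi^2}{D(\Om)^2}\int_\Om u^2|\nabla u|^2\,dx = \tfrac{\pi^2}{4D(\Om)^2}\int_\Om|\nabla v|^2\,dx$. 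Tracking constants, the gain will be exactly of order $\tfrac{(n-1)\pi^2}{D(\Om)^2}\int_\Om u^2|\nabla u|^2\,dx$ relative to the denominator $\tfrac14\int_\Om|\nabla v|^2 = \int_\Om u^2|\nabla u|^2\,dx$, which after dividing by $\lambda(\Om)$ produces precisely the deficit $2\mathcal{T}(\Om)=\tfrac{2(n-1)\pi^2}{D(\Om)^2\lambda(\Om)}$ claimed in \eqref{improvedPayne}. (The factor $n-1$ rather than $n$ reflects that one direction of the Hessian — morally the radial one — is already used up in the standard estimate, or equivalently that one should apply the pointwise bound only to the $n-1$ directions orthogonal to $\nabla u$.)

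Concretely, the steps are: (i) record Payne's identity $\int_\Om(\Delta v)^2 = 4\int_\Om|\nabla u|^4 - 4\lambda^2\int_\Om u^4$ after integration by parts; (ii) rewrite $\int_\Om|\nabla u|^4\,dx$ by one more integration by parts as $\int_\Om|\nabla u|^4 = -\int_\Om u\,\nabla u\cdot\nabla(|\nabla u|^2)\,dx - \int_\Om u|\nabla u|^2\Delta u\,dx$, i.e. producing $\lambda\int_\Om u^2|\nabla u|^2$ plus a Hessian term $-2\int_\Om u\,\langle D^2 u\,\nabla u,\nabla u\rangle\,dx$; (iii) convert the Hessian of $u$ into the Hessian of $v=\log u$ via $D^2 u = u(D^2 v + \nabla v\otimes\nabla v)$, isolating the term $\int_\Om u^2\langle D^2v\,\nabla u,\nabla u\rangle\,dx$ to which Theorem~\ref{thm: improvedLog} applies; (iv) apply the log-concavity bound, using it on the directions orthogonal to $\nabla u$ to pick up the factor $n-1$, obtaining $-\int_\Om u^2\langle D^2v\,\nabla u,\nabla u\rangle\,dx \ge \tfrac{\pi^2}{D(\Om)^2}\int_\Om u^2|\nabla u|^2\,dx$ after the directional decomposition is handled (here one also uses $\langle D^2 v\,\nabla v,\nabla v\rangle\le 0$, again from Theorem~\ref{thm: improvedLog}, to discard a sign-favorable term if convenient); (v) assemble the pieces and divide by $\int_\Om|\nabla v|^2 = 4\int_\Om u^2|\nabla u|^2$ and by $\lambda(\Om)$. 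I expect step (iv) to be the main obstacle: one must carefully decompose $\nabla u$ into its (zero, since it's a single vector) and the surrounding directions to justify the $n-1$ rather than $n$, and ensure no boundary terms are lost in (ii)–(iii) — the boundary behaviour of $u$, $\nabla u$, and $D^2 v$ near $\partial\Om$ (where $v=\log u\to-\infty$) needs a short justification, most cleanly by first working on the sublevel-type exhaustion $\{u>\eps\}$ and letting $\eps\to 0$, noting that all the integrals involved are finite because the test function $u^2\in H^2_0(\Om)$.
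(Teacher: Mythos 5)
Your overall strategy is exactly the paper's (test the buckling quotient with $u^2$, integrate by parts once more to expose a Hessian term, and feed the improved log-concavity of \autoref{thm: improvedLog} into the $n-1$ directions orthogonal to $\nabla u$), but two of the concrete formulas on which your outline pivots are wrong, and the second one breaks the proof. First, the ``Payne identity'' in step (i) is false: multiplying $-\Delta u=\lambda u$ by $u^3$ gives $\lambda(\Omega)\int_\Omega u^4\,dx=3\int_\Omega u^2|\nabla u|^2\,dx$, and with this one finds $\int_\Omega(\Delta u^2)^2\,dx=4\int_\Omega|\nabla u|^4\,dx+\tfrac{4}{3}\lambda(\Omega)^2\int_\Omega u^4\,dx$, not $4\int_\Omega|\nabla u|^4\,dx-4\lambda(\Omega)^2\int_\Omega u^4\,dx$. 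Since the final constant $4-2\mathcal{T}(\Omega)$ comes precisely from this bookkeeping, the sign is not a cosmetic slip.

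Second, and decisively, step (iv) applies the log-concavity bound to the wrong term and in the wrong sense. After your steps (ii)--(iii) the quantity to be controlled is $\int_\Omega u^2\bigl(-\langle D^2v\,\nabla u,\nabla u\rangle\bigr)\,dx$ with $v=\log u$; here $\nabla u$ is \emph{parallel} to $\nabla v$, not orthogonal to it, and this term enters the numerator with a plus sign (indeed $3\int_\Omega|\nabla u|^4\,dx=\lambda(\Omega)\int_\Omega u^2|\nabla u|^2\,dx+2\int_\Omega u^2(-\langle D^2v\,\nabla u,\nabla u\rangle)\,dx$), so you need an \emph{upper} bound for it, while \autoref{thm: improvedLog} only provides lower bounds for $-\langle D^2v\,\eta,\eta\rangle$. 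The displayed inequality $-\int_\Omega u^2\langle D^2v\,\nabla u,\nabla u\rangle\,dx\ge\frac{\pi^2}{D(\Omega)^2}\int_\Omega u^2|\nabla u|^2\,dx$ is true (take $\eta=\nabla u$), but it pushes the estimate the wrong way and carries no factor $n-1$. The missing ingredient—the heart of the paper's proof—is to first trade the gradient-direction second derivative for the tangential ones through the equation: since $\Delta u=\sum_{i=1}^{n-1}\langle D^2u\,\tau_i,\tau_i\rangle+\langle D^2u\,\nabla u,\nabla u\rangle/|\nabla u|^2=-\lambda(\Omega) u$, equivalently via the mean curvature $H_u=-\divv(\nabla u/|\nabla u|)$ of the level sets, one obtains $|\nabla u|^4=\divv\!\left(u|\nabla u|^2\nabla u\right)+3\lambda(\Omega)u^2|\nabla u|^2-2u|\nabla u|^3H_u$ and hence $\Lambda(\Omega)\le 4\lambda(\Omega)-2\int_\Omega u|\nabla u|^3H_u\,dx/\int_\Omega u^2|\nabla u|^2\,dx$; only at this point is log-concavity applied, in the $n-1$ orthogonal directions (where $\langle D^2(\log u)\tau_i,\tau_i\rangle=\langle D^2u\,\tau_i,\tau_i\rangle/u$), giving $H_u\ge\frac{(n-1)\pi^2}{D(\Omega)^2}\frac{u}{|\nabla u|}$ and the claimed deficit $2\mathcal{T}(\Omega)$. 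Your parenthetical remark about the radial direction being ``used up'' is exactly this step, but as written your chain of inequalities does not implement it and would not assemble into $(4-2\mathcal{T}(\Omega))\lambda(\Omega)$; once (i) is corrected and (iv) is replaced by the trace/mean-curvature trade just described, your argument coincides with the paper's.
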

\begin{proof}
    Let $u\ge0$ be a Dirichlet eigenfunction associated to $\la(\Om)$. Following the Payne approach, we can use $u^2$ as a test function in the variational definition of $\Lambda(\Om)$. 
    \begin{equation}
      \label{eq: imprvPayne1}
\Lambda(\Om)\le\dfrac{\displaystyle\int_\Om \left(\Delta u^2\right)^2\,dx}{\displaystyle\int_\Om \abs*{\nabla u^2}^2\,dx}=\dfrac{\displaystyle\int_\Om \left(\lambda(\Om)^2u^4+\abs{\nabla u}^4-2\lambda(\Om)u^2\abs{\nabla u}^2\right)\,dx}{\displaystyle\int_\Om u^2\abs{\nabla u}^2\,dx}.
\end{equation}
Using the equation satisfied by $u$, we immediately have 
\begin{equation}
  \label{eq: imprvPayne2}\lambda(\Om)\int_\Om u^4\,dx=3\int_\Om u^2\abs{\nabla u}^2\,dx.\end{equation} 
While, letting
\[H_u=-\divv\left(\dfrac{\nabla u}{\abs{\nabla u}}\right)\]
the mean curvature of the level sets of the function $u$, we can write
\begin{equation}
  \label{eq: imprvPayne3}\abs{\nabla u}^4=\divv\left(u\abs{\nabla u}^2 \nabla u\right)+3\lambda(\Om) u^2\abs{\nabla u}^2-2u\abs{\nabla u}^3 H_u.\end{equation}
Hence, joining \eqref{eq: imprvPayne1} with \eqref{eq: imprvPayne2} and \eqref{eq: imprvPayne3},
\[\Lambda(\Om)\le4\lambda(\Om)-2\dfrac{\displaystyle\int_\Om u\abs{\nabla u}^3 H_u\,dx}{\displaystyle\int_\Om u^2\abs{\nabla u}^2\,dx}.\]
Finally, we claim that the improved log-concavity of $u$ (\autoref{thm: improvedLog}) allows us to estimate 
\begin{equation}\label{curvaturebound}H_u\ge \dfrac{(n-1) \pi^2}{D(\Om)^2} \dfrac{u}{\abs{\nabla u}},\end{equation}
which ensures equation \eqref{improvedPayne}. Indeed, at every $x\in\Om$ with $\nabla u(x) \ne 0$, by definition of $H_u$, we can rewrite it as
\begin{equation}\label{meancurv}H_u=-\dfrac{\displaystyle\sum_{i=1}^{n-1} \langle D^2 u(x) \tau_i,\tau_i\rangle}{\abs{\nabla u(x)}},\end{equation}
where $\set{\tau_1,\tau_2,\dots,\tau_{n-1}, \nabla u(x)/\abs{\nabla u(x)}}$ is an orthonormal base of $\R^n$. 
Then, using the improved log-concavity of $u$, we have that for every $\eta\in\R^n$ 
\[-\langle D^2(\log(u)) \eta,\eta\rangle \ge \dfrac{\pi^2}{D(\Om)^2}\abs{\eta}^2,\]
so that, if $\eta$ is orthogonal to $\nabla u(x)$, we have,
\[-\langle D^2 u(x) \eta,\eta\rangle \ge \dfrac{\pi^2}{D(\Om)^2}u(x)\abs{\eta}^2,\]
which, together with \eqref{meancurv} implies estimate \eqref{curvaturebound}.
\end{proof}

\begin{defi}
    Let $\Om\sbs\R^n$ be a convex set. We define the \emph{support function} of $\Om$ as follows: for every $\nu\in\Spn$,
    \[
    h_\Omega(\nu):=\max_{x\in\Om} (x\cdot\nu).
    \]
Then, we define the \emph{minimal width} of $\Om$ as
\[
w(\Omega)=\min_{\nu\in\Spn} \left(h_{\Om}(\nu)+h_{\Om}(-\nu)\right).
\]
\end{defi}

\begin{oss}
The symmetric support function $h_\Om(\nu)+h_\Om(-\nu)$ measures the distance between two supporting hyperplanes of $\Om$ and orthogonal to the directions $\nu$ and $-\nu$. The minimal width is then, by definition, the minimum of such distances, while the diameter gives the maximum. Hence, the ratio of the minimal width and the diameter of a convex set is a commonly used measure to quantify the "thinness" of a convex set.
\end{oss}
\begin{oss}
Let us notice that the correction term 
\[\mathcal{T}(\Om)=\dfrac{(n-1)\pi^2}{D(\Om)^2\lambda(\Om)},\] 
in equation \eqref{improvedPayne} is a measure of the thinness of the set $\Om$. Indeed, let us recall that for convex sets $\Om$ there exists a dimensional constant $c=c_n$ such that
\begin{equation}
\label{eq: equivlambdawidth}\dfrac{1}{c}\le w(\Om)^{2}\lambda(\Om)\le c.\end{equation}
For the upper bound it is sufficient to consider  an infinite strip $H$ of width $w(\Om)$ containing $\Om$ and, by monotonicity,
\[\la(\Om)\ge\la(H)=\dfrac{\pi^2}{w(\Om)^2}.\]
For the upper bound, we consider $r_\Om$
the inradius of $\Om$, and by~\cite[\S 10.44 Formula (9)]{BonnFench1987} we know that there exists $c_n>0$ such that,
\begin{equation}
\label{widthinradius}
  w(\Om)\le c_n r_\Om.
\end{equation} 
Then, if $B_{r_\Om}$ is a ball inscribed in $\Om$, by monotonicity, we have that
\[\la(\Om)\ge \la(B_{r_\Om})= \dfrac{c_n}{r_\Om^2},\] 
which together with \eqref{widthinradius} proves the upper bound.

Using~\eqref{eq: equivlambdawidth} we estimate $\mathcal{T}(\Om)$ as 
\[\dfrac{1}{c} \left(\dfrac{w(\Om)}{D(\Om)}\right)^2\le \mathcal{T}(\Om)\le c \left(\dfrac{w(\Om)}{D(\Om)}\right)^2.\]
Therefore, the factor in front of $\la(\Om)$ in~\eqref{improvedPayne} gets closer to $4$ as the set $\Om$ becomes thinner, meaning that the inequality~\eqref{improvedPayne} is a ``good'' improvement of Payne's inequality only for non-thin sets. In the following, we will prove a different inequality, which will be ``good'' for thin sets.

\end{oss}

\begin{lemma}\label{lemma}
  Let $n\ge2$, let $\Om\subset\R^n$ be a bounded convex set, and let $u$ be the first Dirichlet eigenfunction on $\Om$. Then, for every $\nu\in\mathbb{S}^{n-1}$, the unit sphere in $\R^n$, and for every $\alpha>0$, we have
  
  \begin{equation}\label{estimatenu}\Lambda(\Om)\le\left(\dfrac{16-8\mathcal{T}(\Om)+8\alpha+3\alpha^2}{4+3\alpha}+\dfrac{48\alpha}{4+3\alpha}\ddfr{\int_\Om u^2(\nabla u\cdot\nu)^2\,dx}{\lambda(\Om)\int_\Om u^4\,dx}\right)\lambda(\Om)\end{equation}
\end{lemma}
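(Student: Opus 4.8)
The plan is to revisit the computation in the proof of Proposition~\ref{prop:improvedPayne}, but instead of using $u^2$ directly as a test function for $\Lambda(\Om)$, to use a perturbed test function of the form $\varphi=u^2+\alpha\,\psi$ where $\psi$ is chosen to exploit a fixed direction $\nu$. The natural candidate, suggested by the one-dimensional model where the buckling eigenfunction is $\sin^2$, is to take $\psi$ related to $u\,(\nabla u\cdot\nu)$ or to a second antiderivative in the $\nu$-direction; more concretely, since in the strip the improvement came from test functions oscillating in the unbounded direction, I would look for $\psi$ so that $\nabla\psi$ picks up the term $u^2(\nabla u\cdot\nu)$ appearing on the right-hand side of \eqref{estimatenu}. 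The scalar $\alpha>0$ is a free parameter to be optimized at the very end, exactly as $\mu$ was optimized in the strip computation and as the coefficient was chosen in the test function $h(y)=\sin^2(y)$ giving $\Lambda_\mu\le\mu+(16+4\mu)/(4+3\mu)$; note the denominator $4+3\alpha$ in \eqref{estimatenu} is precisely the shape one gets from such a one-parameter family, which strongly suggests $\psi$ is built so that $\int_\Om|\nabla\varphi|^2 = (4+3\alpha)\lambda(\Om)\int_\Om u^4\,dx/3 + (\text{lower order})$ after using \eqref{eq: imprvPayne2}.

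The key steps, in order, would be: (i) fix $\nu\in\Spn$ and set $\varphi = u^2 + \alpha\, g$, where $g$ is an explicit quadratic-type expression in $u$ and $\nabla u\cdot\nu$ chosen so that $\varphi\in H^2_0(\Om)$ (this requires $g$ and $\nabla g$ to vanish on $\partial\Om$, which forces $g$ to contain at least a factor $u^2$ or $u\,\partial_\nu$-type correction — I expect $g$ proportional to $u^2$ times something, or to involve $(\nabla u\cdot\nu)$ multiplied by a factor vanishing to second order); (ii) expand $\int_\Om(\Delta\varphi)^2$ and $\int_\Om|\nabla\varphi|^2$, using $-\Delta u=\lambda u$ repeatedly to reduce all $\Delta$'s; (iii) apply the integration-by-parts identities \eqref{eq: imprvPayne2} and \eqref{eq: imprvPayne3} together with the curvature bound \eqref{curvaturebound} coming from improved log-concavity (Theorem~\ref{thm: improvedLog}) to control the cross terms involving $H_u$, which is where the $\mathcal{T}(\Om)$ contribution enters, just as in Proposition~\ref{prop:improvedPayne}; (iv) collect the resulting numerator and denominator as polynomials in $\alpha$ with coefficients built from $\int_\Om u^4$, $\int_\Om u^2|\nabla u|^2$, $\int_\Om u^2(\nabla u\cdot\nu)^2$, and divide through by $\lambda(\Om)\int_\Om u^4\,dx$ using \eqref{eq: imprvPayne2} to convert $\int_\Om u^2|\nabla u|^2$ into $\tfrac13\lambda(\Om)\int_\Om u^4\,dx$; (v) read off \eqref{estimatenu}, identifying that the only term not reducible to the scalar quantities $\{4, \mathcal{T}(\Om), \alpha\}$ is the anisotropic integral $\int_\Om u^2(\nabla u\cdot\nu)^2$, which is therefore left explicit.

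The main obstacle I anticipate is in step (i)–(iii): finding the correct perturbation $g$ so that (a) $\varphi\in H^2_0(\Om)$ with no boundary terms surviving any integration by parts, and (b) the cross terms between $u^2$ and $\alpha g$ in the expansion of $(\Delta\varphi)^2$ combine cleanly — in particular, producing a term of the form $\int_\Om u^2(\nabla u\cdot\nu)^2$ with the right sign and not a messier quantity. The choice is essentially dictated by matching the one-dimensional picture: on an interval, $\nabla u\cdot\nu = u'$ and one wants the perturbation to mimic the extra freedom that, in the strip, was the oscillatory factor $\cos(\sqrt\mu x)$; translating this into a genuinely $n$-dimensional, boundary-adapted correction is the delicate point. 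A secondary technical nuisance is keeping track of the many integration-by-parts steps so that all divergence terms integrate to zero (using $u=|\nabla u|$-weightings that vanish on $\partial\Om$), but this is routine once $g$ is correctly chosen. Once \eqref{estimatenu} is established, the point of the lemma is clear: optimizing over $\alpha>0$ and choosing $\nu$ to make $\int_\Om u^2(\nabla u\cdot\nu)^2$ small (e.g. averaging over $\nu$, or taking $\nu$ in the "long" direction so this integral is a small fraction of $\lambda(\Om)\int_\Om u^4$) will yield a constant strictly below $4$ in the thin regime, complementing Proposition~\ref{prop:improvedPayne} which handles the non-thin regime, and together these give Theorem~\ref{main}.
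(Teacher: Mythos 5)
There is a genuine gap: your proposal never specifies the test function, and the construction you gesture at (an \emph{additive} perturbation $\varphi=u^2+\alpha g$ with $g$ to be determined) is not the one that works, nor do you show it yields \eqref{estimatenu}. The paper's actual Step 1 is a concrete multiplicative construction: fix $\nu$, set $s_\mu(x)=\sin(\sqrt\mu\,x\cdot\nu)$, $c_\mu(x)=\cos(\sqrt\mu\,x\cdot\nu)$, and use the \emph{pair} of test functions $\varphi_c=h\,c_\mu$, $\varphi_s=h\,s_\mu$ (both automatically in $H^2_0(\Om)$ since the plane waves are smooth and bounded with all derivatives — so the boundary-regularity worry you flag in step (i) simply does not arise). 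Bounding $\La(\Om)$ by the mediant of the two Rayleigh quotients,
\begin{equation*}
\La(\Om)\le \frac{\int_\Om\bigl((\De\varphi_c)^2+(\De\varphi_s)^2\bigr)\,dx}{\int_\Om\bigl(\abs{\na\varphi_c}^2+\abs{\na\varphi_s}^2\bigr)\,dx},
\end{equation*}
is the key trick: because $c_\mu^2+s_\mu^2=1$, all oscillatory cross terms cancel and one gets exactly the clean three-term estimate \eqref{eq: oscillatingTest}, namely $\La(\Om)\le\mu+\bigl(\int(\De h)^2+\mu\int\abs{\na h}^2\bigr)/\bigl(\int\abs{\na h}^2+\mu\int h^2\bigr)+4\mu\int(\na h\cdot\nu)^2/\bigl(\int\abs{\na h}^2+\mu\int h^2\bigr)$, valid for every $h\in H^2_0(\Om)$ and every $\mu>0$. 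Your additive ansatz would instead produce a ratio that is quadratic in $\alpha$ in both numerator and denominator, with cross terms depending on the unknown $g$; you acknowledge you do not know how to choose $g$, and there is no argument that any such choice reproduces the specific structure $\mu+\cdots$ with the isolated term $4\mu\int(\na h\cdot\nu)^2$, which is precisely what generates the factor $\tfrac{48\alpha}{4+3\alpha}$ in \eqref{estimatenu}. So the central idea of the lemma is missing from your proposal.

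Your step (ii)--(iv) material is essentially correct but it only covers the paper's (easier) Step 2: once \eqref{eq: oscillatingTest} is available, one takes $h=u^2$, uses $\int_\Om\abs{\na u^2}^2\,dx=\tfrac{4\la(\Om)}{3}\int_\Om u^4\,dx$ and the bound $\int_\Om(\De u^2)^2\,dx\le(4-2\mathcal{T}(\Om))\la(\Om)\int_\Om\abs{\na u^2}^2\,dx$ from \autoref{prop:improvedPayne} (this is indeed where the log-concavity/curvature bound \eqref{curvaturebound} and hence $\mathcal{T}(\Om)$ enter, as you say), notes $(\na u^2\cdot\nu)^2=4u^2(\na u\cdot\nu)^2$, and sets $\mu=\alpha\la(\Om)$ to land on \eqref{estimatenu}. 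But without the oscillating-pair construction of Step 1, your outline does not constitute a proof of the lemma.
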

\begin{proof}
  \proofpart{Estimating with oscillating test functions}
  We claim that for every $\mu>0$ and for every $h\in H^2_0(\Om)$ we have 
  \begin{equation}
    \label{eq: oscillatingTest}
    \La(\Om)\le \mu+ \ddfr{\int_{\Om}^{}(\De h)^2\,dx+\mu \int_\Om \abs{\na h}^2\,dx}{\int_\Om \abs{\na h}^2\,dx+\mu \int_{\Om}^{}h^2\,dx }+ 4\mu \ddfr{\int_{\Om}^{}(\nabla h \cdot \nu)^2\,dx}{\int_\Om \abs{\na h}^2\,dx+\mu \int_{\Om}^{}h^2\,dx}.
  \end{equation}
  Let $h\in H^2_0(\Om)$ and for every positive $\mu$ let 
  \[
    s_\mu(x) = \sin(\sqrt{\mu}\,x\cdot\nu), \qquad \qquad    c_\mu(x) = \cos(\sqrt{\mu}\,x\cdot\nu).
  \]
  We define $\vp_c(x) = h(x)c_\mu(x)$ and \(\vp_s(x)=h(x)s_\mu(x)\). By definition of $\La$ we have that
  \[
    \La(\Om) \le 
    \min \ml\{\ddfr{\int_{\Om}^{}(\De \varphi_c)^2\,dx}{\int_{\Om}^{}\abs{\na \varphi_c}^2\,dx},\ddfr{\int_{\Om}^{}(\De \varphi_s)^2\,dx}{\int_{\Om}^{}\abs{\na \varphi_s}^2\,dx}\mr\}\le 
    \ddfr{\int_{\Om}^{}\ml((\De \varphi_c)^2+(\De \varphi_s)^2\mr)\,dx}{\int_{\Om}^{}\ml(\abs{\na \varphi_c}^2+\abs{\na \varphi_s}^2\mr)\,dx}.
  \]
  Straightforward computations give \eqref{eq: oscillatingTest}.
  \proofpart{Choosing $h$ as the square of the first Dirichlet eigenfunction}
  Let $u$ be the first Dirichlet Laplacian eigenfunction associated to $\la(\Om)$. If we take $h(x)=u^2$ then, following the computations in \autoref{prop:improvedPayne} we have the following two estimates:\[\int_\Om \abs*{\nabla u^2}^2\,dx=\dfrac{4\lambda(\Om)}{3}\int_\Om u^4\,dx,\] 
    \[
    \int_{\Om}^{}\left(\De u^2\right)^2\,dx\le \ml(4-2\mathcal{T}(\Om)\mr)\lambda(\Om)\int_{\Om}^{}\abs*{\na u^2}^2\,dx.
    \]
  In particular,
  \[\La(\Om)\le \mu+\ddfr{8(2-\mathcal{T}(\Om))\lambda(\Om)^2+4\mu\lambda(\Om)}{4\lambda(\Om)+3\mu}+\dfrac{48\mu}{4\lambda(\Om)+3\mu}\ddfr{\int_\Om u^2(\nabla u\cdot\nu)^2\,dx}{\lambda(\Om)\int_\Om u^4\,dx}.\]
  Finally, the assertion follows choosing $\mu=\alpha\lambda(\Om)$.
\end{proof}
From the previous lemma, we have the following proposition.
\begin{prop}[Improved payne inequality for thin sets]
  \label{prop: improvedPayneThin}
    Let $n\ge2$, then for every bounded convex set $\Om\subset\R^n$ we have \begin{equation}\label{thinPayne}
    \Lambda(\Om)\le \dfrac{8}{3}\sqrt{2}\left(1+\dfrac{2^{5/6}}{(n-1)^{1/3}} \mathcal{T}(\Om)^{1/3}\right)^3\lambda(\Om)
    \end{equation}
\end{prop}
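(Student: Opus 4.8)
The plan is to feed the positive first Dirichlet eigenfunction $u$ of $\Om$ into \autoref{lemma} and to make judicious choices of the direction $\nu$ and of the parameter $\al>0$. From \eqref{estimatenu}, for every $\nu\in\Spn$ and every $\al>0$,
\[
\frac{\La(\Om)}{\la(\Om)}\le\frac{16-8\mathcal{T}(\Om)+8\al+3\al^2}{4+3\al}+\frac{48\al}{4+3\al}\cdot\frac{\int_\Om u^2(\na u\cdot\nu)^2\,dx}{\la(\Om)\int_\Om u^4\,dx}.
\]
Write $P:=\min_{\nu\in\Spn}\dfrac{\int_\Om u^2(\na u\cdot\nu)^2\,dx}{\la(\Om)\int_\Om u^4\,dx}$. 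One always has $P\le\tfrac1{3n}\le\tfrac13$: averaging over an orthonormal frame and using \eqref{eq: imprvPayne2}, $\sum_i\int_\Om u^2(\pa_iu)^2\,dx=\int_\Om u^2|\na u|^2\,dx=\tfrac{\la(\Om)}{3}\int_\Om u^4\,dx$, so $\min_i\int_\Om u^2(\pa_iu)^2\,dx\le\tfrac{1}{3n}\la(\Om)\int_\Om u^4\,dx$. The heart of the proof is the following refinement, which is the statement I would try to establish (the \emph{Key estimate}):
\[
P\ \le\ \left(\frac{2\mathcal{T}(\Om)}{n-1}\right)^{1/3}\ =\ \left(\frac{2\pi^2}{D(\Om)^2\la(\Om)}\right)^{1/3}.
\]
If $\mathcal{T}(\Om)\ge(n-1)/54$ the right-hand side is $\ge\tfrac13$ and the estimate is automatic, so only the regime of small $\mathcal{T}(\Om)$ requires work.

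I expect the Key estimate to be the main obstacle. The route I would follow: with $v=\log u$, since $\na u=u\na v$ and $\pa_\nu(u^4)=4u^4\pa_\nu v$, and since $u^4=0$ on $\pa\Om$, an integration by parts in the direction $\nu$ gives
\[
\int_\Om u^2(\na u\cdot\nu)^2\,dx=\int_\Om u^4(\pa_\nu v)^2\,dx=-\frac14\int_\Om u^4\,\langle D^2v(x)\,\nu,\nu\rangle\,dx ,
\]
so one must bound from above, for a well-chosen \emph{fixed} $\nu$, the $L^1(u^4\,dx)$-average of $-\langle D^2v\,\nu,\nu\rangle$, a nonnegative quantity which is $\ge\pi^2/D(\Om)^2$ by \autoref{thm: improvedLog}. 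The natural choice of $\nu$ is a ``long'' direction, e.g.\ the direction of a diameter: along it $u$ stays positive over a segment of length $D(\Om)$, so its one-dimensional restrictions $t\mapsto u^2(x+t\nu)$ are log-concave, vanish at their two endpoints, satisfy $(\log u^2)''\le-2\pi^2/D(\Om)^2$ by \autoref{thm: improvedLog}, and are spread over a length $\lesssim D(\Om)$. Integrating this slicewise information and coupling it with \eqref{eq: imprvPayne2} and the comparisons $\tfrac1c\le w(\Om)^2\la(\Om)\le c$ and $w(\Om)\le c_n r_\Om$ recalled at \eqref{eq: equivlambdawidth}--\eqref{widthinradius} should produce a bound of the stated shape; extracting exactly the exponent $1/3$ and the constant $(2/(n-1))^{1/3}$ is the delicate accounting.

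Granting the Key estimate, set $\ga:=\bigl(2\mathcal{T}(\Om)/(n-1)\bigr)^{1/3}$, so $P\le\ga$ and, dropping the nonpositive term $-8\mathcal{T}(\Om)$, the displayed inequality yields, for every $\al>0$,
\[
\frac{\La(\Om)}{\la(\Om)}\ \le\ \frac{16+(8+48\ga)\al+3\al^2}{4+3\al}.
\]
A one-variable minimization of the right-hand side over $\al>0$ (its derivative vanishes exactly where $9\al^2+24\al+192\ga-16=0$) gives: when $\ga<\tfrac1{12}$, the minimum is $16\ga+\tfrac19\sqrt{1152-6912\ga}\le16\ga+\tfrac{8\sqrt2}{3}$ (since $\sqrt{1152}=24\sqrt2$); when $\ga\ge\tfrac1{12}$, there is no positive critical point and the infimum $4$ is attained as $\al\to0^+$, and $4\le\tfrac43+\tfrac{8\sqrt2}{3}\le16\ga+\tfrac{8\sqrt2}{3}$ because $16\ga\ge\tfrac43$ and $\sqrt2\ge1$. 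In all cases $\La(\Om)/\la(\Om)\le16\ga+\tfrac{8\sqrt2}{3}$. Finally $\frac{2^{5/6}}{(n-1)^{1/3}}\mathcal{T}(\Om)^{1/3}=\sqrt2\,\ga$, so the inequality $(1+y)^3\ge1+3y$ applied with $y=\sqrt2\,\ga$ gives
\[
\frac83\sqrt2\left(1+\frac{2^{5/6}}{(n-1)^{1/3}}\mathcal{T}(\Om)^{1/3}\right)^3=\frac83\sqrt2\,(1+\sqrt2\,\ga)^3\ \ge\ \frac83\sqrt2+16\ga\ \ge\ \frac{\La(\Om)}{\la(\Om)},
\]
which is \eqref{thinPayne}.
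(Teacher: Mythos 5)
Your reduction of \eqref{thinPayne} to the \emph{Key estimate} $P:=\min_{\nu}\int_\Om u^2(\na u\cdot\nu)^2\,dx\,/\,(\la(\Om)\int_\Om u^4\,dx)\le (2\mathcal{T}(\Om)/(n-1))^{1/3}$ is arithmetically sound: the optimization in $\al$ (critical value $2\al^*+\tfrac{8+48\ga}{3}\le 16\ga+\tfrac{8\sqrt2}{3}$) and the final use of $(1+y)^3\ge 1+3y$ are correct. But the Key estimate itself is exactly the hard part, and you do not prove it; you only indicate a route and concede that ``extracting exactly the exponent $1/3$ and the constant is the delicate accounting.'' Worse, the route you sketch cannot work as stated. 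Writing $\int_\Om u^2(\na u\cdot\nu)^2\,dx=-\tfrac14\int_\Om u^4\langle D^2v\,\nu,\nu\rangle\,dx$ with $v=\log u$, what you need is an \emph{upper} bound on the $u^4$-average of $-\langle D^2v\,\nu,\nu\rangle$ along a long direction, i.e.\ a quantitative statement that $\log u$ is nearly affine in the diameter direction. The ingredients you invoke (\autoref{thm: improvedLog}, log-concavity of the slices, vanishing at the endpoints of a segment of length $D(\Om)$, and \eqref{eq: equivlambdawidth}--\eqref{widthinradius}) all point in the opposite direction: the improved log-concavity gives the \emph{lower} bound $-\langle D^2v\,\nu,\nu\rangle\ge\pi^2/D(\Om)^2$, and a one-dimensional log-concave profile vanishing at the endpoints of a long segment can still be concentrated on a short subinterval, in which case the slicewise quantity is of order $\la(\Om)$, not $o(1)$ as $\mathcal{T}(\Om)\to0$. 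Controlling $P$ for a general thin convex set requires genuinely new information on how slowly the eigenfunction varies in the long directions (the example of thin triangles shows the exponent $1/3$ is the true worst-case rate, so there is no slack to absorb a crude argument); none of this is supplied, so the proof has a genuine gap at its central step.

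For comparison, the paper avoids any such eigenfunction estimate on a general $\Om$. It first applies \autoref{lemma} on a cylinder $C_{A,l}=A\times(0,l)$ with $\nu=\mathbf{e}_n$, where the product structure of the eigenfunction makes the term $\int u^2(\na u\cdot\nu)^2\,dx$ explicitly comparable to $\pi^2/l^2$, and minimizes in $\al$ to get $\La(C_{A,l})\le\tfrac83\sqrt2\,(\tilde\la(A)+\sqrt2\,\pi^2/l^2)$. Then, for a general convex $\Om$, it inscribes by convexity a scaled cylinder $C_z=(1-z)\Om'_t\times(t,t+zD(\Om)/2)\subset\Om$, uses domain monotonicity of $\La$, the bound $\inf_t\tilde\la(\Om'_t)\le\la(\Om)$ from Beck, and optimizes in $z\in(0,1)$, which is what produces the cube $\bigl(1+2^{5/6}(n-1)^{-1/3}\mathcal{T}(\Om)^{1/3}\bigr)^3$ in \eqref{thinPayne}. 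If you want to salvage your approach, you would have to prove your Key estimate by some such geometric reduction anyway; as written, the argument is incomplete.
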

\begin{proof}
    We start by considering the case of a cylinder of the type $C_{A,l}=A\times(0,l)$. Then 
    \[\lambda(C_{A,l})={\tilde \lambda}(A)+\dfrac{\pi^2}{l^2}\]
    and the first Dirichlet Laplacian eigenfunction is given by
    \[u(x',x_n)=u_A(x')\sin\left(\fr{\pi}{l} x_n\right),\]
    where $u_A$ is the first Dirichlet Laplacian eigenfunction on $A$. Then, for every $\alpha\ge0$, by \eqref{estimatenu}, with $\nu=\mathbf{e}_n$, we have
   \[\begin{split}
   \Lambda(C_{A,l})&\le \dfrac{16+8\alpha+3\alpha^2}{4+3\alpha}\lambda(C_{A,l})+\dfrac{48\alpha}{4+3\alpha}\ddfr{\int_{C_{A,l}} u^2(u_{x_n})^2\,dx}{\int_{C_{A,l}} u^4\,dx}\\[10 pt]
   &\le \dfrac{16+8\alpha+3\alpha^2}{4+3\alpha}\lambda(C_{A,l}) + \dfrac{16\alpha}{4+3\alpha}\dfrac{\pi^2}{l^2}\\[10 pt]
   &\le\dfrac{16+8\alpha+3\alpha^2}{4+3\alpha}{\tilde \lambda}(A) + \dfrac{16+24\alpha+3\alpha^2}{4+3\alpha}\dfrac{\pi^2}{l^2}.
   \end{split}\]
   Minimizing with respect to $\alpha\ge0$ the coefficient of ${\tilde \lambda}(A)$, we have 
   \begin{equation}\label{estimatescilinders}
   \La(C_{A,l})\le \dfrac{8}{3}\sqrt{2}\left({\tilde \lambda}(A)+\sqrt{2}\dfrac{\pi^2}{l^2}\right)
   \end{equation}
   Let now $\Om$ be a bounded convex set, the idea is to estimate the buckling eigenvalue of $\Om$ with the one of an appropriate cylinder. Without loss of generality, we can assume that a diameter of $\Om$ lies on the $x_n$ axis. For every $t$ let \[\Om'_t=\set{x'\in\R^{n-1}|\,(x',t)\in\Om}.\] 
   
    For every $t$ such that $(0',t)\in\Omega$ we have that either $(0',t- D(\Om)/2)\in\Om$ or $(0',t+ D(\Om)/2)\in\Om$, and without loss of generality we assume the latter. By the convexity of $\Om$, we have that the convex hull of $\ml(\Om'_{t}\times\set{t}\mr)\cup\set{(0',t+ D(\Om)/2)}$ is contained in $\Om$ (see \autoref{fig:CylinderInsideOm}). Hence, for every $z\in(0,1)$ the set 
    \[C_z=(1-z)\Om'_{t}\times(t,t+z D(\Om)/2)\subset\Om.\]
    
    \begin{figure} \begin{tikzpicture}
        \node[anchor=south west,inner sep=0] (image) at (0,0){\includegraphics[width=.95\linewidth]{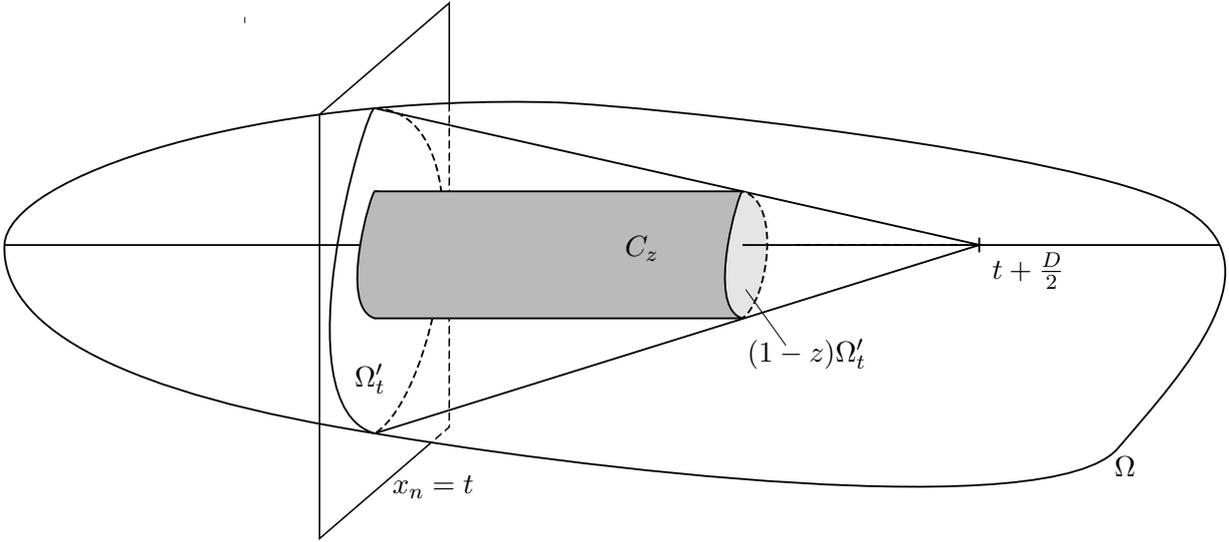}};
        \begin{scope}[x={(image.south east)},y={(image.north west)}]
        \node[anchor=south west] at (.5,.5) {$C_z$};
        \node[anchor=south west] at (.6,.3) {$(1-z)\Om_{t}'$};
        \node[anchor=west] at (.31,.1) {$x_n=t$};
        \node[anchor=south west] at (.9,.1) {$\Om$};
        \node[anchor=west] at (.8,.5) {$t+\frac{D}{2}$};
        \node[] at (.3,.3) {$\Om'_{t}$};
        \end{scope}
    \end{tikzpicture}
      \caption{Construction of the cylinder $C_{z}$}
      \label{fig:CylinderInsideOm}
    \end{figure}
    By domain monotonicity and estimate \eqref{estimatescilinders}, we have
    \[\Lambda(\Om)\le\Lambda(C_z)\le \dfrac{8}{3}\sqrt{2}\left(\dfrac{\tilde \lambda(\Om'_{t})}{(1-z)^2}+4\sqrt{2}\dfrac{\pi^2}{z^2D^2(\Om)}\right).\]
     
  From \cite[Proposition 2.3]{Beck2020} we know that
   \[\inf_{t}\tilde\lambda(\Om'_t)\le\lambda(\Om),\]
    so that, passing to the infimum in $t$, we have
    \[
    \Lambda(\Om)\le\dfrac{8}{3}\sqrt{2}\lambda(\Om)\left(\dfrac{1}{(1-z)^2}+\dfrac{4\sqrt{2}}{z^2(n-1)}\mathcal{T}(\Om)\right).
 \]
    Finally, we obtain the assertion passing to the minimum in $z\in(0,1)$.
\end{proof}

    \begin{figure} \begin{tikzpicture}
        \node[anchor=south west,inner sep=0] (image) at (0,0){\includegraphics[width=.9\linewidth]{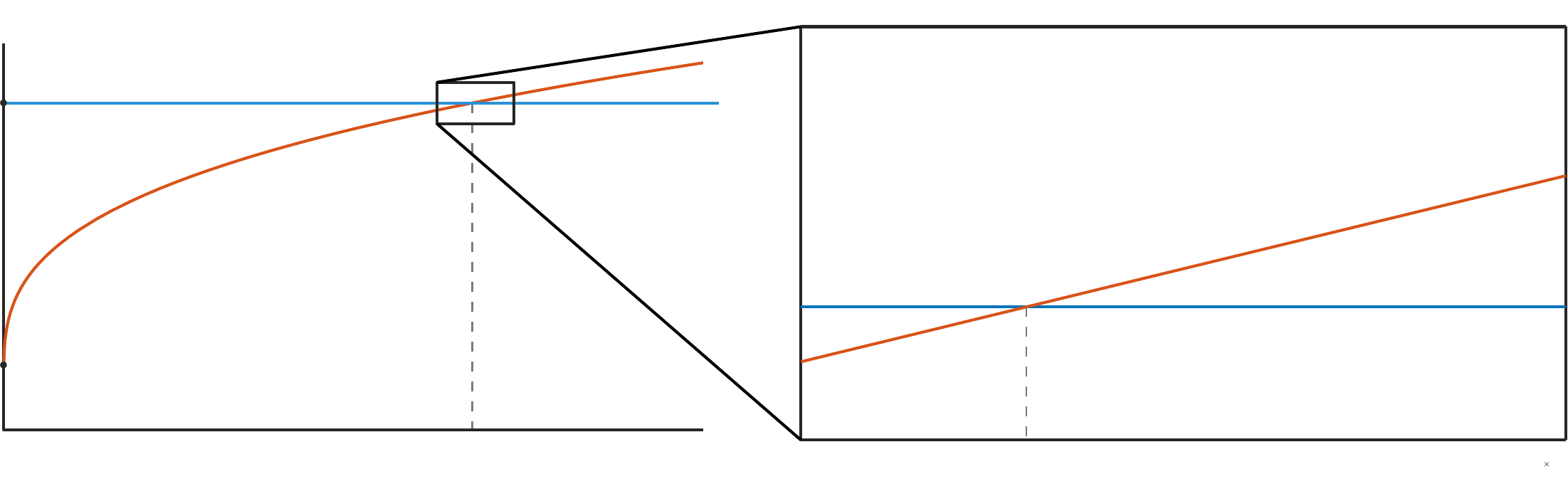}};
        \begin{scope}[x={(image.south east)},y={(image.north west)}]
        \node[anchor=east] at (0,.9) {$\scriptstyle\frac{\La(\Omega)}{\la(\Om)}$};
        \node[anchor=east] at (0,.8) {$\scriptstyle4$};
        \node[anchor=east] at (0,.2) {$\scriptstyle \frac{8}{3}\sqrt{2}$};
        \node[anchor=north] at (.45,.1) {$\scriptstyle\mathcal{T}(\Om)$};
        \node[anchor=east] at (.515,.38) {$\scriptstyle3.999997$};
        \node[anchor=north] at (.65,.05) {$\scriptstyle1.3773\mathrm{e}-6$};
        \node[anchor=north] at (.3,.1) {$\scriptstyle1.3773\mathrm{e}-6$};
        \definecolor{graphBlue}{HTML}{0072bd}
        \definecolor{graphRed}{HTML}{d95319}
        \node[anchor=south west] at (.05 ,.3) {\textcolor{graphRed}{$\scriptstyle{\frac{8}{3}\sqrt{2}\left(1+\frac{2^{5/6}}{(n-1)^{1/3}} \mathcal{T}^{1/3}\right)^3}$}};
        \node[anchor=south west] at (.1,.8) {\textcolor{graphBlue}{$\scriptstyle 4-2\mathcal{T}$}};
        \end{scope}
    \end{tikzpicture}
      \caption{Graph of the two inequalities in \autoref{prop:improvedPayne} (in blue) and \autoref{prop: improvedPayneThin} (in orange) in the case $n=2$.}
      \label{fig:graphInequalities}
    \end{figure}
We can now prove the main result
\begin{proof}[Proof of \autoref{main}]
    Let $n\ge2$ and let $\Om_k\subset\R^n$ be a maximizing sequence of convex sets for  \[\sup_{\Om \text{ convex}}\dfrac{\Lambda(\Om)}{\lambda(\Om)}.\]
    Then, either there exists $C>0$ such that other $\mathcal{T}(\Om_k)\ge c$ or
    \[\lim_{k}\mathcal{T}(\Om_k)=0.\]
   If $\mathcal{T}(\Om_k)\ge c$, using \eqref{improvedPayne}, we have
    \[\sup_{\Om \text{ convex}}\dfrac{\Lambda(\Om)}{\lambda(\Om)}=\lim_k \dfrac{\Lambda(\Om_k)}{\lambda(\Om_k)}\le 4-2c<4.\]
    If 
   \[\lim_{k}\mathcal{T}(\Om_k)=0,\]
   using \eqref{thinPayne}, we have
   \[\sup_{\Om \text{ convex}}\dfrac{\Lambda(\Om)}{\lambda(\Om)}=\lim_k \dfrac{\Lambda(\Om_k)}{\lambda(\Om_k)}\le \dfrac{8}{3}\sqrt{2}<4.\]
\end{proof}

 \begin{oss}
For higher dimensions, we can use estimate \eqref{estimatenu} to obtain an explicit estimate of the constant $C_n$ in \autoref{main}. In particular, if $n\ge5$ then for every bounded convex set $\Om\subset \R^n$ we have 
\begin{equation}\label{largedimPayne}\Lambda(\Om) \le \dfrac{8}{3}\left(\sqrt{2-\dfrac{4}{n}}+\dfrac{2}{n}\right)\lambda(\Om).\end{equation}
Indeed, let $u$ be a first Dirichlet eigenfunction on $\Om$, then 
\[\inf_{\nu\in\mathbb{S}^{n-1}} \int_\Om u^2(\nabla u\cdot \nu)^2\,dx \le \dfrac{1}{n}\int_\Om u^2\abs{\nabla u}^2\,dx=\dfrac{\lambda(\Om)}{3n}\int_\Om u^4\,dx.\]
Therefore, using \eqref{estimatenu}, we have that for every $\alpha>0$
\[\Lambda(\Om)\le \left(\dfrac{16+8\left(1+\dfrac{2}{n}\right)\alpha+3\alpha^2}{4+3\alpha}-\dfrac{8\mathcal{T}(\Omega)}{4+3\alpha}\right)\lambda(\Om).\]
Minimizing the function
\[\alpha\in[0,+\infty)\mapsto\dfrac{16+8\left(1+\dfrac{2}{n}\right)\alpha+3\alpha^2}{4+3\alpha},\]
for $n=2,3,4$ we have once again the estimate \eqref{improvedPayne}, while for $n\ge5$ we have 
\[\begin{split}\Lambda(\Om) &\le \left(\dfrac{8}{3}\left(\sqrt{2-\dfrac{4}{n}}+\dfrac{2}{n}\right)-\sqrt{\dfrac{2n}{n-2}}\mathcal{T}(\Om)\right)\lambda(\Om)\\[10 pt] &\le \dfrac{8}{3}\left(\sqrt{2-\dfrac{4}{n}}+\dfrac{2}{n}\right)\lambda(\Om)
\end{split},\]
that is \eqref{largedimPayne}.
\end{oss}

\begin{open}
    Compute 
    \[
         \sup_{\Om \text{ convex}}\dfrac{\Lambda(\Om)}{\lambda(\Om)}.
    \]
    Is it true that the supremum is achieved by the infinite strip?
 
\end{open}

\subsubsection*{Acknowledgements} 
The four authors are members of Gruppo Nazionale per l’Analisi Matematica, la Probabilità e le loro Applicazioni
(GNAMPA) of Istituto Nazionale di Alta Matematica (INdAM). 

 The author Cristina Trombetti has been supported by the Project MUR PRIN-PNRR 2022: "Linear and Nonlinear PDE’S: New directions and Applications", P2022YFAJH

The authors Paolo Acampora and Emanuele Cristoforoni were partially supported by the INdAM - GNAMPA Project, 2024: "Modelli PDE-ODE nonlineari e proprieta' di PDE su domini standard e non-standard", CUP\_E53C23001670001. 

The author Paolo Acampora was partially supported by the INdAM - GNAMPA Project, 2025,
”Analisi di Problemi Inversi nelle Equazioni alle Derivate Parziali”, CUP\_E5324001950001

The author Emanuele Cristoforoni was partially supported by the INdAM - GNAMPA Project, 2025,
”Esistenza, unicità, simmetria e stabilità per problemi ellittici nonlineari e nonlocali”, CUP\_E5324001950001

The authors Paolo Acampora and Carlo Nitsch is partially supported by Centro Nazionale HPC, Big Data e Quantum Computing, (CN\_00000013)(CUP: E63C22000980007), under the PNRR MUR program funded by the NextGenerationEU.

\printbibliography[heading=bibintoc]
\Addresses
\end{document}